\def\xx{{\bf x}}
\def\yy{{\bf y}}
\def\pp{{\bf p}}
\def\bettaa{{\boldsymbol{\beta}}}
\def\alphaa{{\boldsymbol{\alpha}}}
\def\lambdaa{{\boldsymbol{\lambda}}}
\def\ii{\mathnormal{i}}
\def\jj{\mathnormal{j}}
\newtheorem{thm}{Theorem}
\newtheorem{rem}{Remark}
\newtheorem{defn}{Definition}
\newtheorem{prp}{Proposition}
\newcommand{\mk}[1]{\textcolor{red}{MK: #1}}
\title{Moving Least Squares Approximation using Variably Scaled Discontinuous Weight Function}
\author{Mohammad Karimnejad Esfahani \thanks{\href{}{mohammad.karimnejadesfahani@studenti.unipd.it}}, Stefano De Marchi \thanks{\href{}{stefano.demarchi@unipd.it}}, and Francesco Marchetti \thanks{\href{}{francesco.marchetti@unipd.it}}}
\affil[]{Department of Mathematics "Tullio Levi-Civita", University of Padova}
\date{\today}
\begin{document}

\maketitle

\large
\begin{abstract}
    Functions with discontinuities appear in many applications such as image reconstruction, signal processing, optimal control problems, interface problems, engineering applications and so on. Accurate approximation and interpolation of these functions are therefore of great importance. In this paper, we design a moving least-squares approach for scattered data approximation that incorporates the discontinuities in the weight functions. The idea is to control the influence of the data sites on the approximant, not only with regards to their distance from the evaluation point, but also with respect to the discontinuity of the underlying function. We also provide an error estimate on a suitable {\it piecewise} Sobolev Space. The numerical experiments are in compliance with the convergence rate derived theoretically.
\end{abstract}
\section{Introduction}
In practical applications, over a wide range of studies such as surface reconstruction, numerical solution of differential equations and kernel learning \cite{Bayona,Cuomo,Guastavino}, one has to solve the problem of reconstructing an unknown function $f : \Omega \longrightarrow \mathbb{R}$ sampled at some finite set of data sites $X = \{ \xx_{\ii} \}_{1 \leq \ii \leq N} \subset \Omega \subset \mathbb{R}^d$ with corresponding data values $f_{\ii} = f(\xx_{\ii}), \; 1 \leq \ii \leq N$. Since in practice the function values $f_{\ii}$ are sampled at scattered points, and not at a uniform grid, Meshless (or meshfree) Methods (MMs) are used as an alternative of numerical mesh-based approaches, such as Finite Elements Method (FEM) and Finite Differences (FD). The idea of MMs could be traced back to \cite{Lucy}. Afterwards, multivariate MMs existed under many names and were used in different contexts; interested readers are referred to \cite{Nguyen} for an overview over MMs. In a general setting, MMs are designed, at least partly, to avoid the use of an underlying mesh or triangulation. The approximant of $f$ at $X$ can be expressed in the form
\begin{equation} \label{meshless approx}
    s_{f,X}(\xx) = \sum_{\ii=1}^{N} \alpha_{\ii}(\xx) f_{\ii}.
\end{equation}
One might seek a function $s_{f,X}$ that interpolates the data, i.e. $s_{f,X}(\xx_{\ii}) = f_{\ii}, \; 1 \leq \ii \leq N$, and in this case $\alpha_{\ii}(\xx)$ will be the \textit{cardinal functions}. However, one might consider a more generalized framework known as \textit{quasi-interpolation} in which $s_{f,X}$ only approximates the data, i.e., $s_{f,X}(\xx_{\ii}) \approx f_{\ii}$. The latter case means that we prefer to let the approximant $s_{f,X}$ only nearly fits the function values. This is useful, for instance, when the given data contain some noise, or the number of data is too large. The standard approach to deal with such a problem is to compute the Least-Squares (LS) solution, i.e., one minimizes the error (or cost) function 
\begin{align}\label{LS}
    \sum_{\ii=1}^{N} [s_{f,X}(\xx_{\ii}) - f_{\ii}]^2.
\end{align}
A more generalized setting of LS is known as the weighted LS \mk{requires a reference ?}, in which \eqref{LS} turns to
\begin{equation} \label{WLS}
    \sum_{\ii=1}^{N} [s_{f,X}(\xx_{\ii}) - f_{\ii}]^2 w(\xx_{\ii}),
\end{equation}
which is ruled by the \textit{weighted} discrete $\ell_{2}$ inner product. In practice the role of $w(\xx_{\ii})$ is to add more flexibility to the LS formulation for data $f_{\ii}$ that influence the approximation process, which are supposed, for example, to be affected by some noise. However, these methods are global in the sense that all data sites have influence on the solution at any evaluation point $\xx \in \Omega$. Alternatively, for a fixed evaluation point $\xx$, one can consider only $n$-th closest data sites $\xx_{\ii}, \, \ii=1,\ldots,n$ of $\xx$ such that $n \ll N$. The \textit{Moving Least-Squares} (MLS) method, which is a \textit{local} variation of the classical weighted least-squares technique, has been developed following this idea. To be more precise, in the MLS scheme, for each evaluation point $\xx$ one needs to solve a \textit{weighted least-squares} problem, minimizing
\begin{equation}\label{MLS}
    \sum_{\ii=1}^{N} [s_{f,X}(\xx_{\ii}) - f_{\ii}]^2 w(\xx,\xx_{\ii})
\end{equation}
by choosing the weight functions $w(\xx,\xx_{\ii}) : \mathbb{R}^d \times \mathbb{R}^d \longrightarrow \mathbb{R}$ to be localized around $\xx$, so that \textit{few} data sites are taken into account. The key difference with respect to \eqref{WLS} is that the weight function is indeed \textit{moving} with the evaluation point, meaning that it depends on both the $\xx_{\ii}$ and $\xx$. Consequently, for each evaluation point $\xx$, a small linear system needs to be solved. Also, one can let $w(\cdot,\xx_{\ii})$ be a radial function i.e., $w(\xx, \xx_{\ii}) = \varphi(\lVert \xx - \xx_{\ii} \lVert_{2})$ for some non-negative univariate function $\varphi:[0,\infty)\longrightarrow \mathbb{R}$. Doing in this way, $w(\cdot,\xx_{\ii})$ inherits the translation invariance property of radial basis functions. We mention that \eqref{MLS} could be generalized as well by letting $w_{\ii}(\cdot)=w(\cdot,\xx_{\ii})$ moves with respect to a {\it reference} point $\yy$ such that $ \yy \neq \xx$.

The earliest idea of MLS approximation technique can be traced back to Shepard's seminal paper \cite{ref-proceeding}, in which the author considered the approximation by constants. Later on, the general framework of MLS was introduced by Lancaster and Salkauskas in \cite{ref-journal7}, where they presented the analysis of MLS methods for smoothing and interpolation of scattered data. Afterwards, in \cite{ref-journal5} the author analyzed the connection between MLS and the Backus-Gilbert approach \cite{ref-journal15}, and showed that the method is effective for derivatives approximations as well. Since then, MLS method showed its effectiveness in different applications \cite{ref-journal13, ref-journal14}. The error analysis of MLS approximation has been provided by some authors, mainly based on the work of Levin \cite{ref-journal2}. In \cite[Chap. 3 \& 4]{ref-book1} and \cite{ref-journal3} the author suggested error bounds that take into account the so-called \textit{fill-distance}, whose definition is recalled in Subsection \ref{sec:2.1}. Other works focusing on the theoretical aspects of MLS method include \cite{ref-journal8}, in which the authors provided error estimates in $L_{\infty}$ for the function and its first derivatives in the one dimensional case, then \cite{ref-journal9}, where they generalized this approach to the multi-dimensional case. In both these works, the error analysis is based on the \textit{support} of the weight functions and not on the fill distance. More recently, in \cite{ref-journal10} the author obtained an error estimate for MLS approximation of functions that belong to integer or fractional-order Sobolev spaces, which shows similarities to the bound previously studied in \cite{ref-journal11} for kernel-based interpolation. 

The MLS method has rarely been used for approximating piecewise-continuous functions, i.e, functions that possess some discontinuities or jumps. In this case, it would be essential that the approximant takes into account the location of the discontinuities. To this end, in this paper we let the weight function be a \textit{Variably Scaled Discontinuous Kernel} (VSDK) \cite{ref-journal1}. VSDK interpolant have been employed to mitigate the Gibbs phenomenon, outperforming classical kernel-based interpolation in \cite{ref-journal12}. 
Similarly in MLS approximation framework, the usage of VSDK weights allows the construction of data-dependent approximants (as discussed in \cite[\S4]{ref-journal2}) that are able to overcome the performances of classical MLS approximants, as indicated by a careful theoretical analysis and then assessed by various numerical experiments. 

The paper is organized as follows. In Section \ref{sec2} we recall necessary notions of the MLS, VSDKs and Sobolev spaces. Section \ref{sec3} presents the original contribution of this work, consisting in the use of variably scaled discontinuous weights for reconstructing discontinuous functions in the framework of MLS approximation. The error analysis shows that the MLS-VSDKs approximation can outperform classical MLS schemes as the discontinuities of the underlying function are assimilated into the weight function. In Section \ref{sec4} we discuss some numerical experiments that support our theoretical findings, and in Section \ref{sec5} we draw some conclusions.

\section{Preliminaries on MLS and VSKs} \label{sec2}
\subsection{Moving Least Squares (MLS) approximation}\label{sec:2.1}

In this introduction to MLS, we resume and deepen what outlined in the previous section. The interested readers are also refereed to \cite[Chap. 22]{ref-book2}.

Let $ \Omega $ be a non-empty and bounded domain in $\mathbb{R}^d$ and $X$ be the set of $N$ distinct data sites (or centers). We consider the target function $f$, and the corresponding function values $f_{\ii}$ as defined above. Moreover, $\mathcal{P}^{d}_{\ell}$ indicates the space of $d$-variate polynomials of degree at most $\ell \in \mathbb{N}$, with basis $ \{p_1, ..., p_{Q} \}$ and dimension $Q= {\ell +d \choose d}$. 

Several equivalent formulations exist for the MLS approximation scheme. As the standard formulation, the MLS approximant looks for the best weighted approximation to $f$ at the evaluation point $\xx$ in $\mathcal{P}^{d}_{\ell}$ (or any other linear space of functions $\mathcal{U}$), with respect to the discrete $\ell_2$ norm induced by the weighted inner product $ \langle f,g \rangle_{w_{\xx}} = \sum_{\ii=1}^{N} w(\xx_{\ii}, \xx) f(\xx_{\ii}) g(\xx_{\ii})$. Mathematically speaking, the MLS approximant will be the linear combination of the polynomial basis i.e.,
\begin{equation}\label{MLS;standard}
    s_{f,X}(\xx) = \sum_{\jj=1}^{Q} c_{\jj}(\xx) p_{\jj}(\xx),
\end{equation}
where the coefficients are obtained by locally minimizing the weighted least square error in \eqref{MLS}, which is equivalent to minimizing $\lVert f - s_{f} \lVert_{w_{\xx}}$. We highlight that the local nature of the approximant is evident from the fact that the coefficient $c_{\jj}(\xx)$ must be computed for each evaluation point $\xx$.

In another formulation of MLS approximation known as the \textit{Backus-Gilbert} approach, one considers the approximant $s_{f,X}(\xx)$ to be a \textit{quasi interpolant} of the form \eqref{meshless approx}. In this case, one seeks the values of the basis functions $\alpha_{\ii}(\xx)$ (also known as generating or shape functions) as the minimizers of 
\begin{equation}
    \frac{1}{2} \sum_{\ii=1}^{N} \alpha_{\ii}^2(\xx) \frac{1}{w(\xx_{\ii},\xx)}
\end{equation}
subject to the polynomial reproduction constraints \begin{equation*}
    \sum_{\ii=1}^{N} p(\xx_{\ii}) \alpha_{\ii}(\xx) = p(\xx), \quad \text{for all} \; p \in \mathcal{P}_{\ell}^d.
\end{equation*}
Such a constrained quadratic minimization problem can be converted to a
system of linear equations by introducing Lagrange multipliers $ \boldsymbol{\lambda}(\xx) = [\lambda_{1}(\xx),...,\lambda_{Q}(\xx)]^T$. Consequently (e.g see \cite[Corollary 4.4]{ref-book1}), the MLS basis function $\alpha_{\ii} $ evaluated at $\xx$ is given by 
\begin{equation} \label{shape;fun}
    \alpha_{\ii}(\xx) = w(\xx, \xx_{\ii}) \sum_{k=1}^{Q}\lambda_{k}(\xx)p_{k}(\xx_{\ii}), \quad 1 \leq \ii \leq N,
\end{equation}
such that $\lambda_{k}(\xx)$ are the unique solution of 
\begin{equation} \label{lagrange;mult}
    \sum_{k=1}^{Q}  \lambda_{k}(\xx) \sum_{\ii=1}^{N}w(\xx, \xx_{\ii}) p_{k}(\xx_{\ii})p_{s}(\xx_{\ii}) = p_{s}(\xx), \quad  1 \leq s \leq Q.
\end{equation}
We observe that the weight function $w_{\ii}(\xx)=w(\xx,\xx_{\ii})$ controls the influence of the center $\xx_{\ii}$ over the approximant, so it should be \textit{small} when evaluated at a point that is far from $\xx$, that is it should decay to zero fast enough. To this end we may let $w_{\ii}(\xx)$ be positive on a ball centered at $\xx$ with radius $r$, $B(\xx,r)$, and zero outside. For example, a compactly supported radial kernel satisfies such a behaviour. Thus, let $I(\xx) = \{ \ii \in \{1,\ldots,N\}, \lVert \xx - \xx_{\ii} \lVert_2 \leq r \}$ be the family of indices of the centers $X$, for which $w_{\ii}(\xx) > 0$, with $\lvert I \lvert = n \ll N$. Only the centers $\xx_{\ii} \in I$ influence the approximant $s_{f,X}(\xx)$. Consequently, the matrix representation of \eqref{shape;fun} and \eqref{lagrange;mult} is
\begin{align*}
    \alphaa (\xx) &= W(\xx) P^T \lambdaa(\xx), \\
    \lambdaa(\xx) &= (PW(\xx)P^T)^{-1} \pp(\xx),
\end{align*} 
where $\alphaa (\xx) = [\alpha_{1}(\xx),...,\alpha_{n}(\xx)]^T$, $W(\xx) \in \mathbb{R}^{n \times n}$ is the diagonal matrix carrying the weights $w_{\ii}(\xx)$ on its diagonal, $P \in \mathbb{R}^{Q \times n}$ such that its $k$-th row contains $p_{k}$ evaluated at data sites in $I(\xx)$, and $\pp(\xx) = [p_{1}(\xx), ..., p_{Q}(\xx)]^T$. More explicitly the basis functions are given by
\begin{equation} \label{shap;fun;mat;form}
    \alphaa (\xx) = W(\xx) P^T (PW(\xx)P^T)^{-1} \pp(\xx).    
\end{equation}
Moreover, it turns out that the solution of \eqref{MLS;standard} is identical to the solution offered by the Backus-Gilbert approach (see e.g. \cite[Chap. 3 \& 4]{ref-book1}).

In the MLS literature, it is known that a local polynomial basis shifted to the evaluation point $\xx \in \Omega$ leads to a more stable method (see e.g. \cite[Chap. 4]{ref-book1}). Accordingly, we let the polynomial basis to be $ \{ 1,(\cdot - \xx ),\dots,(\cdot - \xx )^{\ell} \}$, meaning that different bases for each evaluation point are employed. In this case, since with standard monomials basis we have $p_{1} \equiv 1$ and $p_{k}(0) = 0$ for $ 2 \leq k \leq Q$, then $\pp(\xx) = [1, 0, ..., 0]^T$. 

To ensure the invertibility of $PW(\xx)P^T$ in \eqref{shap;fun;mat;form}, $X$ needs to be $\mathbb{P}^d_{\ell}$-unisolvent. 
Then as long as $w_{\ii}(\xx)$ is positive, $PW(\xx)P^T$ will be a positive definite matrix, and so invertible; more details are available in \cite[Chap. 22]{ref-book2}.
 
Furthermore, thanks to equation \eqref{shape;fun}, it is observable that the behaviour of $\alpha_{\ii}(\xx)$ is heavily influenced by the behaviour of the weight functions $w_{\ii}(\xx)$, in particular it includes continuity and the support of the basis functions $\alpha_{\ii}(\xx)$. Another significant feature is that the weight functions $w_{\ii}(\xx)$ which are singular at the data sites lead to cardinal basis functions i.e., $\alpha_{\ii}(\xx_{\jj}) = \delta_{\ii, \jj} \; \ii, \jj = 1, ..., n$, meaning that MLS scheme interpolates the data (for more details see \cite[Theorem 3]{ref-journal2}).

We also recall the following definitions that we will use for the error analysis.
\begin{enumerate} \label{def1} 
    \item  A set $\Omega \subset \mathbb{R}^d$ is said to satisfy an \textbf{interior cone condition} if there exists an angle $\Theta \in (0, \pi/2)$ and a radius $r > 0 $ so that for every $\xx \in \Omega$ a unit vector $\xi(\xx)$ exists such that the cone $$ C(\xx, \xi, \Theta, r) = \{ \xx + t\yy : \yy \in \mathbb{R}^d, \lVert \yy \lVert_{2} = 1, \cos(\Theta) \leq \yy^T \xi, t \in [0,r] \} $$ is contained in $\Omega$.
    \item A set $X = \{\xx_{1}, ...,\xx_{N} \}$ with $Q \leq N$ is called $\mathbb{P}^d_{\ell}$-unisolvent if the zero polynomial is the only polynomial from $\mathbb{P}^d_{\ell}$ that vanishes on $X$.
    \item The \textbf{fill distance} is defined as $$ h_{X, \Omega} = \underset{\xx \in \Omega}{\sup} \underset{1 \leq j \leq N}{\min} \lVert \xx - \xx_{j} \lVert_{2}.$$
    \item The \textbf{separation distance} $$ q_{X} = \frac{1}{2} \underset{i \ne j}{\min} \lVert \xx_{i} - \xx_{j} \lVert.$$
    \item The set of data sites $X$ is said to be \textbf{quasi-uniform} with respect to a constant $c_{qu} > 0$ if $$ q_{X} \leq h_{X, \Omega} \leq c_{qu}q_{X}.$$
    \end{enumerate}


\subsection{Sobolev spaces and error estimates for MLS}
Assume $k \in \mathbb{N}_{0}$ and $p \in [1, \infty) $, then the \textit{integer-order} Sobolev space $W_{p}^{k}(\Omega)$ consists of all $u$ with distributional (weak) derivatives $D^{\boldsymbol{\delta}} u \in L^{p}, \lvert \boldsymbol{\delta} \lvert \leq k$. The semi-norm and the norm associated with these spaces are
\begin{equation} \label{sob;norms}
    \lvert u \lvert_{w_{p}^{k}(\Omega)} := \Big( \sum_{\lvert \boldsymbol{\delta} \lvert = k} \lVert D^{\boldsymbol{\delta}} u \lVert_{L^p(\Omega)}^{p} \Big)^{1/p} \;\;, \qquad \lVert u \lVert_{w_{p}^{k}(\Omega)} := \Big( \sum_{\lvert \boldsymbol{\delta} \lvert \leq k} \lVert D^{\boldsymbol{\delta}} u \lVert_{L^p(\Omega)}^{p} \Big)^{1/p}.
\end{equation}
Moreover, letting $0 < s < 1$, the \textit{fractional-order} Sobolev space $W_{p}^{k+s}(\Omega)$ is the space of the functions $u$ for which semi-norm and norm are defined as 
\begin{align*}
    \lvert u \lvert_{w_{p}^{k+s}(\Omega)} &:= \Big( \sum_{ \lvert \boldsymbol{\delta} \lvert = k} \int_{\Omega} \int_{\Omega} \frac{ \lvert D^{\boldsymbol{\delta}} u(\xx) - D^{\boldsymbol{\delta}} u(\yy) \lvert^p }{\lvert \xx - \yy \lvert^{d + ps}} \Big)^{1/p}\\
    \lVert u \lVert_{W_{p}^{k+s}(\Omega)} &:= \Big( \lVert u \lVert_{W_{p}^{k}(\Omega)} + \lvert u \lvert_{W_{p}^{k+s}(\Omega)} \Big)^{1/p}.
\end{align*}
Consider certain Sobolev spaces $ W_{p}^{k}(\Omega) $ with the condition that $1 < p < \infty $ and $k > m + d/p$ (for $p = 1$ the equality is also possible), then according to \cite[Theorem 2.12]{ref-journal11} the sampling inequality  $$ \lVert u \lVert_{W_{p}^m(\Omega)} \leq C h_{X,\Omega}^{k - m -d(1/p - 1/p)_{+}} \lVert u \lVert_{W_{p}^k} $$ holds for a function $u$ that satisfies $u(X) = 0$, with $h_{X, \Omega}$ being the \textit{fill distance} associated with $X$ and $(\yy)_{+} = \max{ \{0,\yy \} }$. For more information regarding Sobolev Spaces and sampling inequalities we refer the reader to \cite{ref-book3} and \cite{ref-thesis1}, respectively.

Getting back to the MLS scheme, let $D^{\boldsymbol{\delta}}$ be a derivative operator such that $\lvert \boldsymbol{\delta} \lvert \leq \ell $ (we recall that $\ell$ is the maximum degree of the polynomials).  Under some mild conditions regarding the weight functions, \cite[Theorem 3.11]{ref-journal10} shows that $\{D^{\boldsymbol{\delta}} \alpha_{\ii}(\xx) \}_{1 \leq \ii \leq n}$ forms a \textit{local polynomial reproduction} in a sense that there exist constants $h_{0}, \; C_{1,\boldsymbol{\delta}}, \; C_{2}$ such that for every evaluation point $\xx$ \begin{itemize}
    \item $\sum_{\ii=1}^{N} D^{\boldsymbol{\delta}} \alpha_{\ii}(\xx) p(\xx_{\ii}) = p(\xx)$ for all $ p \in \mathbb{P}_{\ell}^{d}$
    \item $\sum_{\ii=1}^{N} \lvert D^{\boldsymbol{\delta}} \alpha_{\ii}(\xx) \lvert \leq C_{1,\boldsymbol{\delta}} h_{X,\Omega}^{-\lvert \boldsymbol{\delta} \lvert}$
    \item $ D^{\boldsymbol{\delta}} \alpha_{\ii}(\xx) = 0$ provided that $ \lVert \xx - \xx_{\ii} \lVert_{2} \geqslant C_{2} h_{X,\Omega} $
\end{itemize}
for all $X$ with $h_{X,\Omega} \leq h_{0}$.

The particular case of $\lvert \boldsymbol{\delta} \lvert = 0$ was previously discussed in \cite[Theorem 4.7]{ref-book1} in which it is shown that $\{ \alpha_{\ii}(\xx) \}_{1 \leq \ii \leq n}$ forms a local polynomial reproduction. However in this case the basis functions $\{ \alpha_{\ii}(\cdot) \}_{1 \leq \ii \leq n}$ could be even discontinuous but it is necessary that $w_{\ii}(\xx)$ are bounded (for more details see \cite[Chap 3,4]{ref-book1}). Consequently we restate the the MLS error bound in Sobolev Spaces developed in \cite{ref-journal10}.
\begin{thm} \cite[Theorem 3.12]{ref-journal10}
Suppose that $\Omega \subset \mathbb{R}^d$ is a bounded set with a Lipschitz boundary. Let $\ell$ be a positive integer, $0 \leq s < 1, \; p \in [1, \infty) \; ,q \in [1,\infty]$ and let $\boldsymbol{\delta}$ be a multi-index satisfying $ \ell > \lvert \boldsymbol{\delta} \lvert + d/p$ for $p > 1$ and $ \ell \geqslant \lvert \boldsymbol{\delta} \lvert + d $ for $p =1$. If $f \in W^{\ell+s}_{p}(\Omega)$, there exist constants $C > 0$  and $h_{0}> 0$ such that for all $X = \{\xx_{1}, ..., \xx_{N} \} \subset \Omega$ which are quasi-uniform with $h_{X,\Omega} \leq \min \{h_{0},1 \} $, the error estimate holds
\begin{equation} \label{MLS;err;bound}
    \lVert f - s_{f,X} \lVert_{W_q^{\lvert \boldsymbol{\delta} \lvert}(\Omega)} \leq C h _{X,\Omega}^{\ell+s - \lvert \boldsymbol{\delta} \lvert - d (1/p - 1/q)_{+}}  \lVert f \lVert_{W_{p}^{\ell+s}(\Omega)}. 
\end{equation}
when the polynomial basis, are shifted to the evaluation point $\xx$ and scaled with respect to the fill distance $h_{X,\Omega}$, and $w_{\ii}(\cdot)$ is positive on $[0, 1/2]$, supported in $[0,1]$ such that its even extension is non negative and continuous on $\mathbb{R}$.
\label{thm;Mirzaei}
\end{thm}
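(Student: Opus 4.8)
The plan is to follow the classical template for MLS error estimates (cf.\ \cite{ref-book1} and \cite{ref-journal10}): I would first establish a \emph{pointwise} bound for each derivative $D^{\boldsymbol{\delta}'}(f - s_{f,X})(\xx)$ with $\lvert \boldsymbol{\delta}' \rvert \leq \lvert \boldsymbol{\delta} \rvert$, and then integrate it over $\Omega$ to recover the $W_q^{\lvert \boldsymbol{\delta} \rvert}$-norm. Since $\partial\Omega$ is Lipschitz, I would fix a Sobolev extension $\tilde f \in W_p^{\ell+s}(\mathbb{R}^d)$ of $f$ with $\lVert \tilde f \rVert_{W_p^{\ell+s}(\mathbb{R}^d)} \leq C \lVert f \rVert_{W_p^{\ell+s}(\Omega)}$, so that all local estimates can be carried out on full Euclidean balls, even near the boundary. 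Fixing an evaluation point $\xx \in \Omega$ and recalling that $s_{f,X} = \sum_{\ii} \alpha_{\ii}(\cdot) f_{\ii}$, so that $D^{\boldsymbol{\delta}'}$ acts only on the shape functions, the local polynomial reproduction recalled above (valid here because the basis is shifted to $\xx$ and scaled by $h_{X,\Omega}$, the weights meet the stated positivity/support/continuity conditions, and $X$ is quasi-uniform with $h_{X,\Omega}\leq h_0$) gives, for every $p \in \mathbb{P}_\ell^d$,
\[
D^{\boldsymbol{\delta}'}(f - s_{f,X})(\xx) = D^{\boldsymbol{\delta}'}(f-p)(\xx) - \sum_{\ii=1}^{N} D^{\boldsymbol{\delta}'}\alpha_{\ii}(\xx)\,(f-p)(\xx_{\ii}).
\]
Using the compact-support property (only centers with $\lVert \xx - \xx_{\ii} \rVert_2 \leq C_2 h_{X,\Omega}$ contribute) together with $\sum_{\ii} \lvert D^{\boldsymbol{\delta}'}\alpha_{\ii}(\xx) \rvert \leq C_{1,\boldsymbol{\delta}'} h_{X,\Omega}^{-\lvert \boldsymbol{\delta}' \rvert}$, I would bound this by
\[
\lvert D^{\boldsymbol{\delta}'}(f-s_{f,X})(\xx) \rvert \leq \lVert D^{\boldsymbol{\delta}'}(\tilde f-p) \rVert_{L^\infty(B)} + C_{1,\boldsymbol{\delta}'}\, h_{X,\Omega}^{-\lvert \boldsymbol{\delta}' \rvert}\, \lVert \tilde f-p \rVert_{L^\infty(B)}, \qquad B := B(\xx, C_2 h_{X,\Omega}).
\]

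Next I would choose $p = p_{\xx} \in \mathbb{P}_\ell^d$ to be an averaged Taylor polynomial of $\tilde f$ on $B$, and invoke a Bramble--Hilbert / Dupont--Scott estimate valid for (possibly fractional) Sobolev spaces, $\lvert \tilde f - p_{\xx} \rvert_{W_p^{j}(B)} \leq C h_{X,\Omega}^{\ell+s-j}\, \lvert \tilde f \rvert_{W_p^{\ell+s}(B)}$ for $0 \leq j \leq \ell$. Since the hypotheses force $\ell + s - \lvert \boldsymbol{\delta}' \rvert \geq \ell - \lvert \boldsymbol{\delta} \rvert > d/p$ (for $p = 1$ one has instead $\ell - \lvert \boldsymbol{\delta} \rvert \geq d$ and uses the limiting embedding), a Sobolev embedding on $B$ rescaled from the unit ball would upgrade this to $\lVert D^{\boldsymbol{\delta}'}(\tilde f - p_{\xx}) \rVert_{L^\infty(B)} \leq C h_{X,\Omega}^{\ell+s-\lvert \boldsymbol{\delta}' \rvert - d/p}\, \lvert \tilde f \rvert_{W_p^{\ell+s}(B)}$ and $\lVert \tilde f - p_{\xx} \rVert_{L^\infty(B)} \leq C h_{X,\Omega}^{\ell+s - d/p}\, \lvert \tilde f \rvert_{W_p^{\ell+s}(B)}$. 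Substituting into the previous display, the two terms balance and produce the uniform pointwise estimate $\lvert D^{\boldsymbol{\delta}'}(f-s_{f,X})(\xx) \rvert \leq C h_{X,\Omega}^{\ell+s-\lvert \boldsymbol{\delta}' \rvert - d/p}\, \lvert \tilde f \rvert_{W_p^{\ell+s}(B(\xx, C_2 h_{X,\Omega}))}$.

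The last step is to integrate this in $\xx$ over $\Omega$. For $q = \infty$ I would take the supremum directly, bounding $\lvert \tilde f \rvert_{W_p^{\ell+s}(B)}$ by $\lvert \tilde f \rvert_{W_p^{\ell+s}(\mathbb{R}^d)} \leq C \lVert f \rVert_{W_p^{\ell+s}(\Omega)}$, matching $(1/p - 1/q)_+ = 1/p$. For $q = p$, a Fubini / bounded-overlap argument gives $\int_\Omega \lvert \tilde f \rvert_{W_p^{\ell+s}(B(\xx, C_2 h_{X,\Omega}))}^{p}\, d\xx \leq C h_{X,\Omega}^{d}\, \lvert \tilde f \rvert_{W_p^{\ell+s}(\mathbb{R}^d)}^{p}$, whose factor $h_{X,\Omega}^{d/p}$ cancels the $-d/p$ in the exponent, consistent with $(1/p - 1/p)_+ = 0$. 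For $p < q < \infty$ I would interpolate via $\lVert g \rVert_{L^q(\Omega)} \leq \lVert g \rVert_{L^\infty(\Omega)}^{1 - p/q} \lVert g \rVert_{L^p(\Omega)}^{p/q}$, whose exponent arithmetic yields exactly $\ell + s - \lvert \boldsymbol{\delta}' \rvert - d(1/p - 1/q)$, while for $q < p$ Hölder's inequality on the bounded $\Omega$ reduces matters to $q = p$. Summing over all $\boldsymbol{\delta}'$ with $\lvert \boldsymbol{\delta}' \rvert \leq \lvert \boldsymbol{\delta} \rvert$ and using $h_{X,\Omega} \leq 1$ to retain only the slowest-decaying term $\lvert \boldsymbol{\delta}' \rvert = \lvert \boldsymbol{\delta} \rvert$ would then give \eqref{MLS;err;bound}; the assumption $h_{X,\Omega} \leq \min\{h_0, 1\}$ and quasi-uniformity enter precisely where the local polynomial reproduction is invoked.

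The hard part, I expect, will be the local polynomial approximation in the \emph{fractional} Sobolev scale together with the sharp rescaled embedding on a shrinking ball: one must make sure the powers of $h_{X,\Omega}$ track the fractional index $s$ correctly and that every constant is uniform in $\xx$, including for $\xx$ near the merely Lipschitz boundary --- which is exactly why the extension operator and a careful covering argument are indispensable. Everything else (the reproduction identity, the bounded-overlap counting, and the $L^q$ interpolation) should be routine once that local estimate is in place.
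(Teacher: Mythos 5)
This theorem is imported verbatim from \cite[Theorem 3.12]{ref-journal10}; the paper itself offers no proof, only the citation, so there is no internal argument to compare against. Your outline reconstructs essentially the standard proof given in that reference (building on \cite[Chap.~3--4]{ref-book1} and \cite{ref-journal11}): the reproduction identity with an arbitrary $p\in\mathcal{P}^d_\ell$, the three local polynomial reproduction properties of $\{D^{\boldsymbol{\delta}}\alpha_{\ii}\}$, a fractional-order Bramble--Hilbert estimate on balls of radius $\sim h_{X,\Omega}$ combined with a Stein extension across the Lipschitz boundary, and a covering/interpolation step to pass from the pointwise bound to the $W_q^{\lvert\boldsymbol{\delta}\rvert}$ norm --- so the approach is the right one and appears sound. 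One small remark: the derivative reproduction property should read $\sum_{\ii} D^{\boldsymbol{\delta}}\alpha_{\ii}(\xx)\,p(\xx_{\ii}) = D^{\boldsymbol{\delta}}p(\xx)$, as you implicitly use in your identity, rather than $=p(\xx)$ as transcribed in Section~\ref{sec2} of the paper.
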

\begin{rem}
The above error bounds holds also when $s=1$. However, recalling the definition of (semi-)norms in \textit{fractional-order} Sobolev space, we see that in this case we reach to an \textit{integer-order} Sobolev space of $\ell+1$. Therefore, it requires that $\ell+1 > \lvert \boldsymbol{\delta} \lvert + d/p$ for $p>1$ or $\ell+1 \geqslant \lvert \boldsymbol{\delta} \lvert$ for $p=1$ in order that \eqref{MLS;err;bound} holds true. The key point is that in this case, the polynomial space is still $\mathcal{P}_{\ell}^d$ and not $\mathcal{P}_{\ell+1}^d$.
\end{rem}
\subsection{Variably Scaled Discontinuous Kernels (VSDKs)} 
Variably Scaled Kernels (VSKs) were firstly introduced in \cite{ref-journal4}. The basic idea behind them is to map the data sites from $\mathbb{R}^d$ to $\mathbb{R}^{d+1}$ via a scaling function $\psi : \Omega \longrightarrow \mathbb{R}$ and to construct an augmented approximation space in which the data sites are $\{ (\xx_{\ii}, \psi(\xx_{\ii})) \; \ii = 1, ..., N \}$ (see \cite[Def. 2.1]{ref-journal4}). Though the first goal of doing so was getting a \textit{better} nodes distribution in the augmented dimension, later on in \cite{ref-journal1} the authors came up with the idea of also encoding the behaviour of the underlying function $f$ inside the scale function $\psi$. Precisely, for the target function $f$ that possesses some jumps, the key idea is the following. 
\begin{defn} \label{def;scale;fun}
Let $\mathcal{P} = \{\Omega_{1}, ..., \Omega_{n} \}$ be a partition of $\Omega$ and let $\bettaa=(\beta_{1}, ..., \beta_{n})$ be a vector of real distinct values. Moreover, assume that all the jump discontinuities of the underlying function $f$ lie on $\bigcup_{\jj=1}^n{\partial \Omega_{\jj}}$. The piecewise constant scaling function $\psi_{\mathcal{P},\bettaa}$ with respect to the partition $\mathcal{P}$ and the vector $\bettaa$ is defined as $$\psi_{\mathcal{P},\bettaa}(\xx)|_{\Omega_{\jj}} = \beta_{\jj},\;\xx\in\Omega.$$ Successively, let $\Phi^{\varepsilon}$ be a positive definite radial kernel on $\Omega \times \Omega$ that depends on the shape parameter $\varepsilon > 0$. A variably scaled discontinuous kernel on $(\Omega \times \mathbb{R}) \times  (\Omega \times \mathbb{R})$ is defined as 
\begin{equation} \label{def;VSDK}
    \Phi^{\varepsilon}_{\psi}(\xx, \yy) = \Phi^{\varepsilon} \big( \Psi(\xx), \Psi(\yy) \big),\quad \xx, \yy \in \Omega.
\end{equation}
such that $\Psi(\xx) = (\xx, \psi(\xx))$.
\end{defn}
Moreover, we point out that if $\Phi^{\varepsilon}$ is (strictly) positive definite then so is $\Phi^{\varepsilon}_{\psi}$, and if $\Phi^{\varepsilon}$ and $\psi$ are continuous then so is $\Phi^{\varepsilon}_{\psi}$ \cite[Theorem 2.2]{ref-journal4}. Figure~\ref{fig1} shows two different choices for the discontinuous scale function for the univariate case. In any case, it matters that the discontinuities of the target function $f$ are assimilated into the kernel $\Phi^{\varepsilon}_{\Psi}$.
\begin{figure}[h]
\includegraphics[width = 0.5\textwidth]{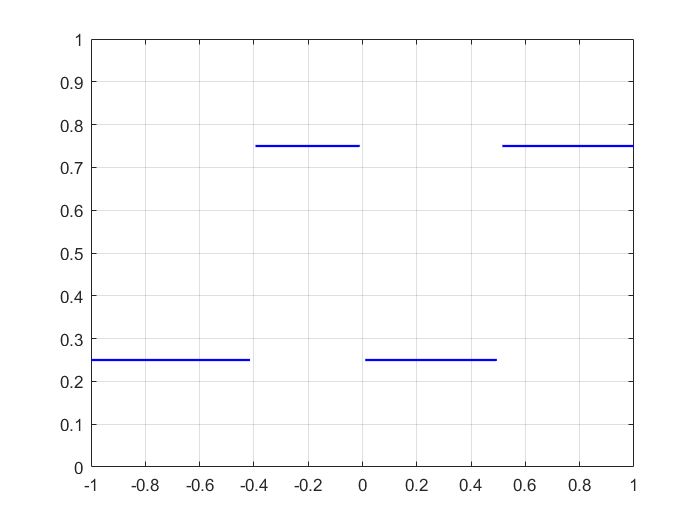}
\hfill
\includegraphics[width = 0.5\textwidth]{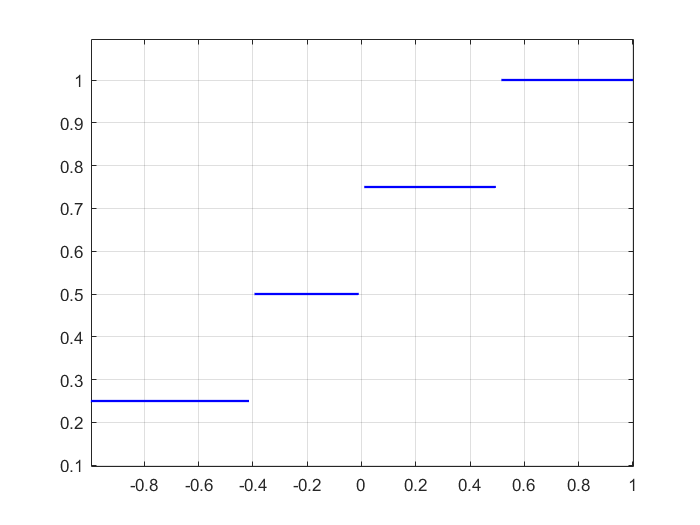}
\caption{ Discontinuous scale functions.}
\label{fig1}
\end{figure}
\section{MLS-VSDKs} \label{sec3}
Let $f$ be a function with some jump discontinuities defined on $\Omega$, $\mathcal{P}$ and $\psi_{\mathcal{P},\beta}$ as in Definition \ref{def;scale;fun}. We look for the MLS approximant with \textit{variably scaled discontinuous weight function} such that
\begin{equation} \label{VSDK;weight}
w_{\psi}(\xx,\xx_{\ii})= w(\Psi(\xx), \Psi(\xx_{\ii})).
\end{equation}
Above all, we point out that in this case the diagonal matrix $W(\xx)$ in \eqref{shap;fun;mat;form} still carries only positive values by assumption, and therefore the equation \eqref{shap;fun;mat;form} is still solvable meaning that the basis functions $\alpha(\xx)$ uniquely exist. However, with new weight functions, from \eqref{VSDK;weight} also $\alpha(\xx)$ might be continuous or discontinuous regarding to the given data values $f_{\ii}$. Therefore our basis functions are indeed data-dependent thanks to \eqref{VSDK;weight}. From now on, we call this scheme MLS-VSDK, and we will denote the corresponding approximant as $s_{f,X}^{\psi}$.  

Since the basis functions are data dependent, one might expect that the space in which we express the error bound should be data dependent as well. Towards this idea, for $k \in \mathbb{Z}, \; 0 \leq k$, and $1 \leq p \leq \infty$, we define the \textit{piecewise} Sobolev Spaces
\begin{equation*}
    {\cal W}^k_{p} (\Omega) = \{ f: \Omega \longrightarrow \mathbb{R} \; \text{s.t.} \; f_{\lvert_{\Omega_{\jj}}} \in W_{p}^{k}(\Omega_{\jj}), \quad  \jj \in \{ 1,..., n \} \},
\end{equation*}
where $f_{\lvert_{\Omega_{\jj}}}$ denotes the restriction of $f$ to $\Omega_{\jj}$, and $W_{p}^k(\Omega_{\jj})$ denote the Sobolev space on $\Omega_{\ii}$. We endow ${\cal W}^k_{p} (\Omega)$ with the norm
\begin{equation} \label{piece;norm}
    \lVert f \lVert_{{\cal W}_{p}^{k} (\Omega)} = \sum_{\jj=1}^{n} \lVert f 
    \lVert_{W_{p}^k(\Omega_{\jj})}.
\end{equation}
When $k = 0$ we simply denote ${\cal W}_{p}^{0}(\Omega) $ by $ {{\cal L}^p}(\Omega)$. Moreover, it could be shown that for any partition of $\Omega$ the standard Sobolev space $W_{p}^k(\Omega)$ is contained in ${\cal W}_{p}^{k}(\Omega)$ (see \cite{ref-journal12} and reference therein). We assume that every set $\Omega_{\jj} \in \mathcal{P}$ satisfies Lipschitz boundary conditions which will be essential for our error analysis. 
\begin{prp} \label{prop}
    Let $\mathcal{P}$ be as in Definition \ref{def;scale;fun} and set the derivative order $\boldsymbol{\delta} = 0$. Then, by using Theorem \eqref{thm;Mirzaei}, the error satisfies the inequality
    \begin{equation}   \label{sub;dom;bound}
    \lVert f - s_{f,X}^{\psi} \lVert_{L^2(\Omega_{\jj})} \leq C_{\jj} h _{\Omega_{\jj}}^{\ell+1 - d (1/p - 1/2)_{+}}  \lVert f \lVert_{W_{p}^{\ell+1}(\Omega_{\jj})}, \quad\quad \text{for all} \;\; \Omega_{\jj} \in \mathcal{P}
    \end{equation}
    with $h_{\Omega_{\jj}} $ the fill distance with respect to $\Omega_{\jj}$.
\end{prp}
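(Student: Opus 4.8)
The plan is to reduce Proposition~\ref{prop} to a direct application of Theorem~\ref{thm;Mirzaei} on each subdomain $\Omega_{\jj}$ separately, exploiting the fact that the VSDK weight function restricted to a single piece of the partition is just an ordinary (continuous) weight of the type admitted by that theorem. The crucial observation is the following: fix $\Omega_{\jj}\in\mathcal{P}$. Since $\psi_{\mathcal{P},\bettaa}$ is constant on $\Omega_{\jj}$, say $\psi\equiv\beta_{\jj}$ there, for any two points $\xx,\xx_{\ii}\in\Omega_{\jj}$ we have $\Psi(\xx)=(\xx,\beta_{\jj})$ and $\Psi(\xx_{\ii})=(\xx_{\ii},\beta_{\jj})$, so $w_{\psi}(\xx,\xx_{\ii})=w\big((\xx,\beta_{\jj}),(\xx_{\ii},\beta_{\jj})\big)$ depends on $\xx,\xx_{\ii}$ in exactly the same way the original weight $w$ depends on its two arguments (the extra coordinate cancels in the radial distance, or is simply a fixed shift). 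In particular, on $\Omega_{\jj}$ the weight is continuous, positive on the appropriate ball, and compactly supported — it satisfies all the hypotheses on $w_{\ii}(\cdot)$ required by Theorem~\ref{thm;Mirzaei}.

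The steps, in order, would be: (i) Fix $\Omega_{\jj}$ and let $X_{\jj}=X\cap\Omega_{\jj}$ be the data sites lying in that subdomain. (ii) Argue that, for an evaluation point $\xx\in\Omega_{\jj}$ whose stencil $I(\xx)$ lies entirely inside $\Omega_{\jj}$ — which holds once the fill distance is small enough relative to the geometry of $\Omega_{\jj}$, since the support radius of the weight scales like $C_2 h_{X,\Omega}$ — the MLS-VSDK approximant $s_{f,X}^{\psi}$ coincides with the ordinary MLS approximant built on $\Omega_{\jj}$ using $f|_{\Omega_{\jj}}\in W_p^{\ell+1}(\Omega_{\jj})$, the data $X_{\jj}$, and the (now continuous) weight described above. (iii) Invoke Theorem~\ref{thm;Mirzaei} with $\boldsymbol{\delta}=0$, $q=2$, $s=1$ (using the Remark that allows $s=1$, which lands in the integer-order space $W_p^{\ell+1}$), and with $\Omega$ there replaced by $\Omega_{\jj}$, which is legitimate because $\Omega_{\jj}$ has a Lipschitz boundary by assumption and $X_{\jj}$ is quasi-uniform. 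This yields exactly \eqref{sub;dom;bound} with $h_{\Omega_{\jj}}=h_{X_{\jj},\Omega_{\jj}}$ and a subdomain-dependent constant $C_{\jj}$.

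The main obstacle — and the point that deserves the most care — is step (ii): the claim that the VSDK scheme \emph{localizes} to the subdomains. One must ensure that for every evaluation point $\xx\in\Omega_{\jj}$ the active stencil $I(\xx)=\{\ii:\lVert\xx-\xx_{\ii}\rVert_2\le r\}$ contains only centers from $\Omega_{\jj}$. This is \emph{not} automatic from the VSDK construction alone: a center $\xx_{\ii}$ in a neighbouring subdomain $\Omega_{k}$ that is Euclidean-close to $\xx$ gets weight $w\big((\xx,\beta_{\jj}),(\xx_{\ii},\beta_{k})\big)$, which is suppressed only if $|\beta_{\jj}-\beta_{k}|$ is large enough that the augmented distance exceeds the support radius. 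So one either assumes the scaling values $\bettaa$ are chosen with sufficiently large gaps (so that cross-subdomain centers always fall outside the support — this is the intended design of VSDKs and is the natural hypothesis to state), or one restricts attention to $\xx$ sufficiently far from $\partial\Omega_{\jj}$ and absorbs the boundary layer into the asymptotics. Either way, once the stencil is confined to $\Omega_{\jj}$, the matrices $P$, $W(\xx)$, $\pp(\xx)$ in \eqref{shap;fun;mat;form} are identical to those of the plain MLS problem on $\Omega_{\jj}$, the shape functions agree, and the error bound transfers verbatim. A secondary (routine) point is checking the $\mathbb{P}_{\ell}^d$-unisolvency and quasi-uniformity of $X_{\jj}$ on $\Omega_{\jj}$, which follow from the corresponding global hypotheses together with the interior cone / Lipschitz condition on each $\Omega_{\jj}$; these guarantee the constants $h_0$, $C_{1,\boldsymbol{\delta}}$, $C_2$ exist piece by piece, and one takes the worst case over the finitely many $\jj$.
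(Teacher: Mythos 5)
Your proposal is correct and follows essentially the same route as the paper's own proof: restrict to each $\Omega_{\jj}$, observe that the constant value of $\psi_{\mathcal{P},\bettaa}$ there makes $w_{\psi}$ reduce to the ordinary continuous weight, and apply Theorem \ref{thm;Mirzaei} with $\boldsymbol{\delta}=0$, $q=2$, $s=1$ on the Lipschitz subdomain. The one point you flag as delicate --- step (ii), i.e.\ guaranteeing that the stencil of an evaluation point $\xx\in\Omega_{\jj}$ does not pick up centers, and hence data values of $f$, from beyond the discontinuity, which needs either sufficiently separated scale values $\beta_{\jj}$ or a boundary-layer restriction --- is real and is precisely what the paper's own two-line proof passes over in silence, so your treatment is if anything the more careful of the two.
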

\begin{proof}
Recalling Definition \ref{def;scale;fun} we know that the discontinuities of $f$ and subsequently $w_{\ii}(\cdot)$ are located only at the boundary and not on the domain $\Omega_{\jj}$, meaning that $w_{\ii}(\cdot)$ is continuous inside $\Omega_{\jj}$. Furthermore, the basis $\{ \alpha_\ii (\xx) \}_{1 \leq \ii \leq n}$ forms a local polynomial reproduction i.e., there exists a constant $C$ such that $\sum_{\ii=1}^{N} \lvert \alpha_{\ii} \lvert \leq C$. Letting $s = 1$ and $q=2$, by noticing that $W_{q}^{0}(\Omega_{\jj}) = L^q(\Omega_{\jj})$, then the error bound \eqref{sub;dom;bound} is an immediate consequence of Theorem \eqref{thm;Mirzaei}.
\end{proof}
From the above proposition, it could be understood that $s_{f,X}^{\psi}$ behaves similarly to $s_{f,X}$ in the domain $\Omega_{\jj}$, where there is no discontinuity. This is in agreement with Definition \ref{def;scale;fun}. Consequently, it is required to extend the error bound \eqref{sub;dom;bound} to the whole domain $\Omega$.
\begin{thm} \label{VSDK;err;est;thm}
Let $f$, $\mathcal{P}$, $\psi_{\mathcal{P},\beta}$ be as before, and the weight functions as in \eqref{VSDK;weight}. Then, for $ \ell > \lvert \boldsymbol{\delta} \lvert + d/p$ (equality also holds for $p=1$), and $ f \in {\cal W}_{p}^{\ell+1}(\Omega)$, for the MLS-VSDK approximant $s_{f,X}^{\psi}$ the error can be bounded as follows:
\begin{equation} \label{MLS;VSDK;err}
    \lVert f - s^{\psi}_{f,X} \lVert_{{\cal L}^2(\Omega)} \leq C {h}^{\ell+1 - d (1/p - 1/2)_{+}} \lVert f \lVert_{{\cal W}_{p}^{\ell+1} (\Omega) }
\end{equation}
\end{thm}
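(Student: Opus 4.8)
The plan is to treat Theorem~\ref{VSDK;err;est;thm} as the \emph{aggregation} of the local estimates already established in Proposition~\ref{prop}: essentially all of the analytical work sits in the local bound \eqref{sub;dom;bound}, and what remains is to patch the pieces together through the definition \eqref{piece;norm} of the piecewise norm. The reason Proposition~\ref{prop} applies piece by piece is that $\psi_{\mathcal{P},\bettaa}$ is constant on each $\Omega_\jj$ and jumps only across $\bigcup_\jj\partial\Omega_\jj$, so the VSDK weight $w_\psi(\xx,\xx_\ii)=w(\Psi(\xx),\Psi(\xx_\ii))$ is continuous in $\xx$ on the interior of every $\Omega_\jj$; moreover, provided the entries of $\bettaa$ are spread out enough relative to the support of $w$, a centre $\xx_\ii\notin\Omega_\jj$ satisfies $\lVert\Psi(\xx)-\Psi(\xx_\ii)\rVert\ge|\beta_\jj-\beta_k|$ for $\xx\in\Omega_\jj$ and hence contributes zero weight, so that $s^{\psi}_{f,X}$ restricted to $\Omega_\jj$ is an ordinary MLS approximant built on $X\cap\Omega_\jj$ with a continuous, compactly supported weight --- precisely the setting of Theorem~\ref{thm;Mirzaei}.

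Granting that, I would carry out the following steps. First, by the $k=0$ instance of \eqref{piece;norm},
\[
  \lVert f - s^{\psi}_{f,X}\rVert_{{\cal L}^2(\Omega)}=\sum_{\jj=1}^{n}\lVert f - s^{\psi}_{f,X}\rVert_{L^2(\Omega_\jj)}.
\]
Second, apply Proposition~\ref{prop} to each summand to obtain $\lVert f - s^{\psi}_{f,X}\rVert_{L^2(\Omega_\jj)}\le C_\jj\,h_{\Omega_\jj}^{\,e}\,\lVert f\rVert_{W_p^{\ell+1}(\Omega_\jj)}$ with $e:=\ell+1-d(1/p-1/2)_+$. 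Third, set $C:=\max_{1\le\jj\le n}C_\jj$ and $h:=\max_{1\le\jj\le n}h_{\Omega_\jj}$; since $\boldsymbol{\delta}=0$ here and the hypothesis $\ell>|\boldsymbol{\delta}|+d/p$ forces $e>0$, the map $t\mapsto t^{e}$ is increasing, so $h_{\Omega_\jj}^{\,e}\le h^{\,e}$ for every $\jj$. Fourth, recollect the Sobolev norms using \eqref{piece;norm} once more:
\[
  \sum_{\jj=1}^{n}C_\jj\,h_{\Omega_\jj}^{\,e}\,\lVert f\rVert_{W_p^{\ell+1}(\Omega_\jj)}\le C\,h^{\,e}\sum_{\jj=1}^{n}\lVert f\rVert_{W_p^{\ell+1}(\Omega_\jj)}=C\,h^{\,e}\,\lVert f\rVert_{{\cal W}_p^{\ell+1}(\Omega)},
\]
which is \eqref{MLS;VSDK;err}. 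If one wishes $h$ to be the global fill distance $h_{X,\Omega}$ rather than $\max_\jj h_{\Omega_\jj}$, one invokes the quasi-uniformity of $X$ to see that each $h_{\Omega_\jj}$ is comparable to $h_{X,\Omega}$ and absorbs the comparison constant into $C$.

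The computation above is routine; the point that really needs care is verifying that the hypotheses of Proposition~\ref{prop} (equivalently of Theorem~\ref{thm;Mirzaei}) hold \emph{simultaneously} on every piece $\Omega_\jj$. Concretely, one needs: each $\Omega_\jj$ has a Lipschitz boundary (assumed just before Proposition~\ref{prop}); $X\cap\Omega_\jj$ is $\mathbb{P}^d_\ell$-unisolvent and quasi-uniform on $\Omega_\jj$ with $h_{\Omega_\jj}\le\min\{h_0,1\}$; and the decoupling described above is exact, i.e.\ the gaps $|\beta_\jj-\beta_k|$ are at least the radius of the support of $w$ (after the fill-distance scaling), so that no centre outside $\Omega_\jj$ influences the approximant on $\Omega_\jj$. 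The first two requirements force the $h_0$ in the statement to be the minimum of the thresholds produced by Theorem~\ref{thm;Mirzaei} over the finitely many subdomains, and finiteness of $\mathcal{P}$ is exactly what makes $\max_\jj C_\jj$ and $\max_\jj h_{\Omega_\jj}$ finite. Once this bookkeeping is settled the estimate is immediate from the four lines above, and I expect this uniform-hypotheses check --- together with pinning down the correct notion of fill distance on $\Omega_\jj$ for the lifted data in $\mathbb{R}^{d+1}$ --- to be the only real obstacle.
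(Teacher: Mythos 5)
Your proposal is correct and follows essentially the same route as the paper's own proof: sum the local estimates of Proposition~\ref{prop} over the subdomains, identify the sum with the piecewise norms via \eqref{piece;norm}, and absorb the subdomain-dependent constants and fill distances into $C=\max_\jj C_\jj$ and $h=\max_\jj h_{\Omega_\jj}$. The additional bookkeeping you flag (uniform validity of the hypotheses on each $\Omega_\jj$, positivity of the exponent, decoupling of the scaled weights across subdomains) is glossed over in the paper but does not change the argument.
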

\begin{proof}
By Proposition \eqref{prop}, we know that \eqref{sub;dom;bound} holds for each $\Omega_{\jj}$. Let $h_{X, \Omega_{\ii}}$ and $C_{\ii}$ be the fill distance and a constant associated with each $ \Omega_{\ii}$, respectively. Then, we have
\begin{equation*}
    \sum_{\jj=1}^n \lVert f - s_{f,X}^{\psi} \lVert_{L^2(\Omega_{\jj})} \leq \sum_{\jj=1}^n C_{\jj} h_{X, \Omega_{\ii}}^{\ell+1 - d (1/p - 1/2)_{+}}  \lVert f \lVert_{W_{p}^{\ell+1}(\Omega_{\jj})}.
\end{equation*}
By definition we get $ \sum_{\jj=1}^n \lVert f - s_{f,X}^{\psi} \lVert_{L^2(\Omega_{\jj})} = \lVert f - s_{f,X}^{\psi} \lVert_{{\cal L}^2(\Omega)}$. Moreover, letting $ C = \max\{C_{1}, ..., C_{n} \}$ and $h=max\{h_{X,\Omega_{1}}, ..., h_{X,\Omega_{n}} \} $ then the right hand side can be bounded by
\begin{equation*}
    C h^{\ell+1 - d (1/p - 1/2)_{+}} \lVert f \lVert_{{\cal W}_{p}^{\ell+1}(\Omega)}.
\end{equation*}
Putting these together we conclude.
\end{proof}
Some remarks are in order.
\begin{enumerate}
    \item One might notice that the error bound in \eqref{MLS;err;bound} is indeed local (the basis functions are local by assumption), meaning that if $f$ is less smooth in a subregion of $\Omega$, say it possesses only $\ell^{'} \leq \ell$ continuous derivatives there, then the approximant (interpolant) has order $\ell^{'} + 1$ in that region and this is the best we can get. On the other hand according to \eqref{MLS;VSDK;err}, thanks to the definition of piecewise Sobolev space, the regularity of the underlying function in the interior of the subdomain $\Omega_{\jj}$ matters. In other words, as long as $f$ possesses regularity of order $\ell$ in subregions, say $\Omega_{\jj}$ and $\Omega_{\jj+1}$, the approximant order of $\ell + 1$ is achievable, regardless of the discontinuities on the boundary of $\Omega_{\jj}$ and $\Omega_{\jj+1}$.
    \item Another interesting property of the MLS-VSDK scheme is that it is indeed data dependent. To clarify, for the evaluation point $\xx \in \Omega_{\jj}$ take two data sites $\xx_{\ii}, \; \xx_{\ii+1} \in B(\xx,r)$ with the same distance from $\xx$ such that $\xx_{\ii} \in \Omega_{\jj}$ and $\xx_{\ii+1} \in \Omega_{\jj+1}$. Due to the definition (\ref{def;VSDK}), $w_{\psi}(\xx, \xx_{\ii+1})$ decays to zero faster than $w_{\psi}(\xx, \xx_{\ii})$ i.e., the data sites from the same subregion $\Omega_{\jj}$ pay more contribution to the approximant (interpolant) $s_{f,X}^{\psi}$, rather than the one from another subregion $\Omega_{\jj+1}$ beyond a discontinuity line \mk{is line correct for high dimension?}. On the other hand in the classical MLS scheme, this does not happen as the weight function gives the same value to both $\xx_{\ii}$ and $\xx_{\ii+1}$.
    \item We highlight that in MLS-VSDK scheme we do not scale polynomials and so the polynomial space $\mathcal{P}^d_{\ell}$ is not changed. We scale only the weight functions and thus, in case the given function values bear discontinuities, the basis functions $\{ \alpha_{\ii}(\cdot) \}_{1 \leq \ii \leq n}$ are modified. 
\end{enumerate}
We end this section by recalling that the MLS approximation convergence order is achievable only in the \textit{stationary setting}, i.e., the shape parameter $\varepsilon$ must be scaled with respect to the fill distance. It leads to \textit{peaked basis functions} for densely spaced data and \textit{flat basis function} for coarsely spaced data. In other words, the local support of the weight functions $B(\xx,r)$, and subsequently basis functions must be tuned with regards to the $h_{X,\Omega}$ using the shape parameter $\varepsilon$. Consequently, this holds also in MLS-VSDK scheme, meaning that after scaling $w_{\ii}$ we still need to take care of $\varepsilon$. This is different with respect to VS(D)Ks interpolation where $\varepsilon =1$ was kept fixed \cite{ref-journal4, ref-journal1}.


\section{Numerical experiments} \label{sec4}
In this section, we compare the performance of the MLS-VSDK with respect to the classical MLS method. In all numerical tests we fix the polynomials space up to degree $1$. Considering the evaluation points as $Z = \{z_{1}, ..., z_{s}\}$ we compute root mean square error and maximum error by
\begin{equation*}
RMSE = \sqrt{{\frac{1}{s}}\sum_{i=1}^s (f(z_{\ii}) - s_{f,X}(z_{\ii}))^2}, \quad MAE = \underset{z_{\ii} \in Z}{\max} \lvert f(z_{\ii}) - s_{f,X}(z_{\ii}) \lvert.
\end{equation*}
We consider four different weight functions to verify the convergence order of $s_{f,x}^{\psi}$ to a given $f$, as presented in Theorem \ref{VSDK;err;est;thm}.
\begin{enumerate}
    \item 
    $w^1(\xx,\xx_{\ii}) =  (1-\varepsilon \lVert \xx - \xx_{\ii} \lVert)^4_+ \cdot (4\varepsilon \lVert \xx - \xx_{\ii} \lVert + 1)$, which is the well-known $C^2$ \textit{Wendland} function. Since each $w^1_{\ii}$ is locally supported on the open ball $B(0,1)$ then it verifies the conditions required by Theorem \eqref{VSDK;err;est;thm}.
    \item
    $w^2(\xx, \xx_{\ii}) = \exp(- \varepsilon \lVert \xx - \xx_{\ii} \lVert^2)$, i.e. the Gaussian RBF. We underline that when Gaussian weight functions are employed, with decreasing separation distance of the approximation centers, the calculation of the basis functions in \eqref{shap;fun;mat;form} can be badly conditioned. Therefore, in order to make the computations stable,  in this case we regularize the system by adding a small multiple, say $\lambda = 10^{-8}$, of the identity to the diagonal matrix $W$.
    \item
    $w^3(\xx, \xx_{\ii}) = \exp(-\varepsilon \lVert \xx - \xx_{\ii} \lVert)(15 + 15 \lVert \xx - \xx_{\ii} \lVert + 6 \lVert \xx - \xx_{\ii} \lVert^2 + \lVert \xx - \xx_{\ii} \lVert^3)$, that is a \textit{$C^6$ Matérn} function.
    \item
    $w^4(\xx,\xx_{\ii}) = ( \exp{( \varepsilon \lVert \xx - \xx_{\ii} \lVert )}^2 - 1 )^{-1}$, suggested in \cite{ref-journal2}, which enjoys an additional feature which leads to interpolatory MLS, since it possesses singularities at the centers.
\end{enumerate}
One might notice that $w^2$, $w^3$ and $w^4$ are not locally supported. However, the key point is that they are all decreasing with the distance from the centers and so, in practice, one can overlook the data sites that are so far from the center $\xx$. As a result, one generally considers a \textit{local stencil} containing $n$ nearest data sites of the set $Z$ of evaluation points. While there is no clear theoretical background concerning the stencil size, in MLS literature, one generally lets $n = 2 \times Q$ (see e.g \cite{ref-journal6}). However, it might be possible that in some special cases one could reach a better accuracy using different stencil sizes. This aspect is covered by our numerical tests, which are outlined in the following.
\begin{enumerate}
    \item 
    In Section \ref{sub;sec;1}, we present an example in the one-dimensional framework, where the stencil size is fixed to be $n=2 \times Q$. Moreover, we consider $w^1$, $w^2$ and $w^3$.
    \item
    In Section \ref{sub;sec;2}, we move to the two-dimensional framework and we keep the same stencil size. Here, we restrict the test to the weight function $w^1$ and verify Theorem \eqref{VSDK;err;est;thm}.
    \item
    In Section \ref{sub;sec;3}, we remain in the two-dimensional setting but the best accuracy is achieved with $n=20$. Moreover, in addition to $w^2$ and $w^3$, we test the interpolatory case by considering $w^4$ as weight function. \item
    In Section \ref{sub;sec;4}, we present a two-dimensional experiments where the data sites have been perturbed via some white noise. We fix $n=25$ and $w^2,w^3$ are involved.  
\end{enumerate}

\subsection{Example 1} \label{sub;sec;1}

On $\Omega=(-1,1)$, we assess MLS approximant for 
\begin{equation*}
    f_{1}(x) = \begin{cases}
    e^{-x}, \quad -1 < x < -0.5\\
    x^3, \quad -0.5 \leq x < 0.5, \\
    1, \quad 0.5 \leq x < 1
    \end{cases}
\end{equation*}
with discontinuous scale function
    \begin{equation*}
        \psi(x) = \begin{cases} 
        1, \; \; x \in (-1, \-0.5) \; \text{and} \; [0.5, 1) \\
        2, \; \; x \in [-0.5, 0.5)\,.
        \end{cases}
    \end{equation*}
We note that the function $\psi$ is defined only by two cases. The important fact is that has a jump as $f_1.$

To evaluate the approximant consider the evaluation grid of equispaced points with step size $5.0e-4$. Tables \ref{f1;table} and \ref{f1;table2} include RMSE of $f_{1}$ approximation using $w^1$ as the weight function.
\begin{table}[ht] 
\begin{tabular}{ || c | c |  c |  c || }
\hline
\textbf{number of centers}	& \textbf{$\varepsilon$ value} & \textbf{RMSE MLS-VSDK}  &\textbf{RMSE classic MLS}\\
\hline \hline
9		& 0.25		& 3.58e-1	      & 3.95e-1  \\
17		& 0.5		& 1.99e-1 	      & 3.02e-1  \\
33      & 1         & 3.10e-3        & 2.17e-1  \\
65      & 2         & 8.42e-4        & 1.54e-1 \\
257     & 4         & 5.67e-5        & 7.68e-2  \\
513     & 8         & 1.43e-5        & 5.35e-2  \\
\hline
\end{tabular}
\centering
\caption{Comparison of the RMSE for $f_{1}$ approximation at \textit{uniform} data sites.\label{f1;table}}
\centering
\end{table}
\begin{table}[ht] 
\begin{tabular}{ || c | c |  c |  c || }
\hline
\textbf{number of centers}	& \textbf{$\varepsilon$ value} & \textbf{RMSE MLS-VSDK}  &\textbf{RMSE classic MLS}\\
\hline \hline
9		& 0.25		& 3.53e-1	      & 3.77e-1  \\
17		& 0.5		& 1.99e-1 	      & 3.01e-1  \\
33      & 1         & 3.08e-3        & 2.17e-1  \\
65      & 2         & 8.39e-4        & 1.54e-1  \\
257     & 4         & 5.67e-5        & 7.73e-2  \\
513     & 8         & 1.43e-5        & 5.41e-2  \\
\hline
\end{tabular}
\centering
\caption{Comparison of the RMSE for $f_{1}$ approximation at \textit{Halton} data sites.\label{f1;table2}}
\centering
\end{table}

Again, in order to investigate the convergence rate, consider two sets of uniform and Halton nodes with the size from Table \ref{f1;table}. In order to generalize our results to  globally supported weight functions, we take into account $w^{2}$ and $w^{3}$, Gaussian and Mat\'ern $C^6$ radial functions respectively. For the uniform data sites let the shape parameter values to be $\boldsymbol{\varepsilon}_{GA}^{U} = [5, 20, 40, 80, 160, 320] $ and $\boldsymbol{\varepsilon}_{Mat}^{U} = [5, 10, 20, 40, 80, 160]$ for $w^2$ and $w^3$. Our computation shows convergence rates of $ 2.54 $ and $ 2.26 $ for MLS-VSDK scheme, shown in Figure \ref{f1;con;rate}. Accordingly, for \textit{Halton} points let $\boldsymbol{\varepsilon}_{Mat}^{H} = [5, 10, 20, 50, 200, 400]$, $\boldsymbol{\varepsilon}_{GA}^{H} = [10, 20, 30, 50, 100, 200] $. The corresponding convergence rates are $2.38$ and $2.33$.
\begin{figure}[!ht]
    \includegraphics[width = 0.5\textwidth]{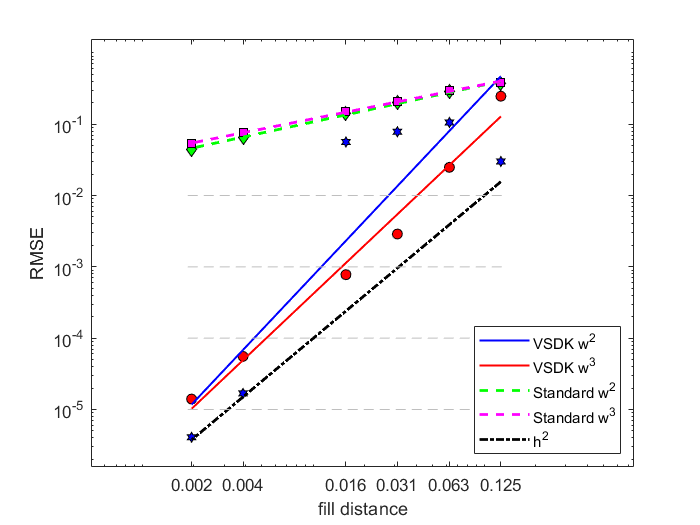}
    \hfill
    \includegraphics[width = 0.5\textwidth]{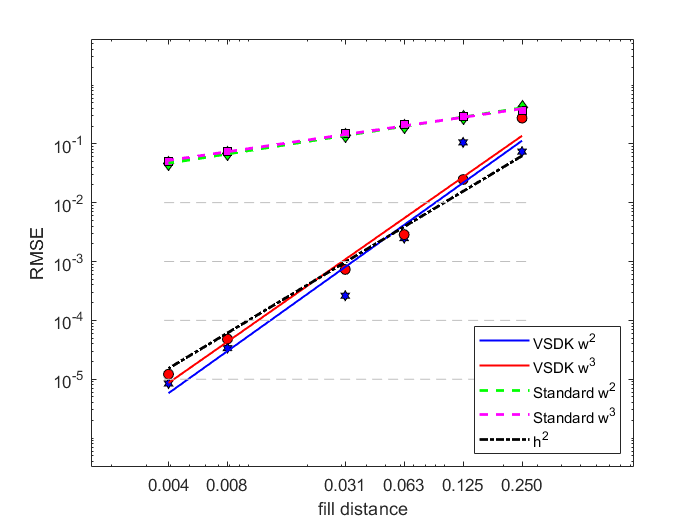}
    \caption{Convergence rates for approximating $f_1$ with MLS-VSDK and MLS-Standard schemes using \textit{uniform} data sites (left) and \textit{Halton} data sites (right).}
   \label{f1;con;rate}
\end{figure}
On the other hand, using non-scaled weight functions, the standard MLS scheme can hardly reach an approximation order of 1, in both cases.

\subsection{Example 2} \label{sub;sec;2}
Consider on $\Omega = (-1,1)^2$ the discontinuous function
\begin{equation*}
    f_{2}(x,y) = \begin{cases}
    \exp(-(x^2+y^2)), \quad & x^2 + y^2 \leq 0.6 \\
    x+y, \quad  & x^2 + y^2 > 0.6 
    \end{cases}
\end{equation*}
and the discontinuous scale function $$ \psi(x, y) = \begin{cases} 
    1, \; \; x^2 + y^2 \leq 0.6 \\
    2, \; \; x^2 + y^2 > 0.6
    \end{cases} $$
As evaluation points, we take the grid of equispaced points with mesh size $1.00e-2$. Figure \ref{f2;RMSE;abs} shows both the \textit{RMSE} and \textit{absolute error} for the classical \textit{MLS} and \textit{MLS-VSDK} approximation of $f_{2}$ sampled from $1089=33^2$ uniform data sites taking $w^{1}$ as the weight function. Figure \ref{f2;RMSE;abs} shows that using classical MLS, the approximation error significantly increases near the discontinuities, while using MLS-VSDK the approximant can overcome this issue.
\begin{figure}[!ht]
   \includegraphics[width=0.475\textwidth]{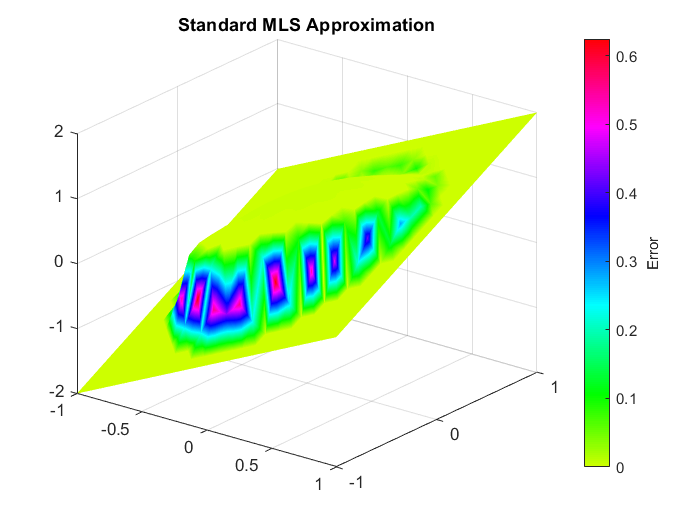}
   \hfill
   \includegraphics[width=0.475\textwidth]{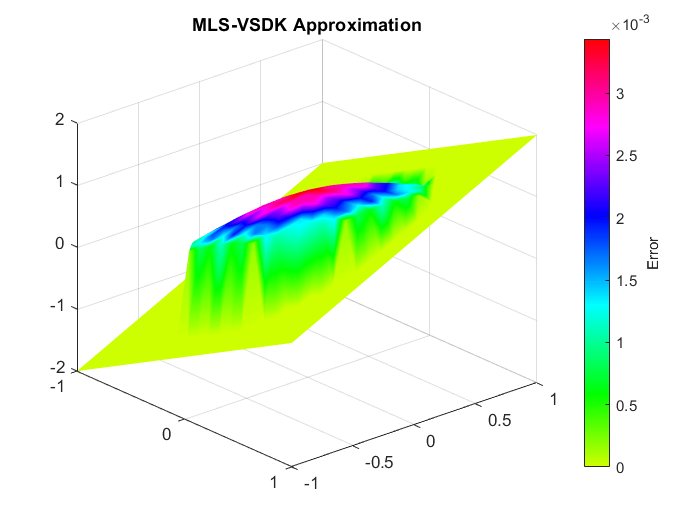}
   \includegraphics[width=0.475\textwidth]{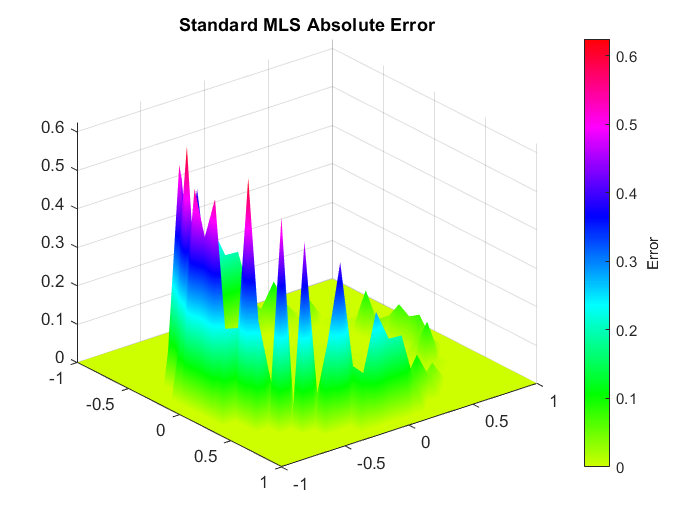}
   \hfill
   \includegraphics[width=0.475\textwidth]{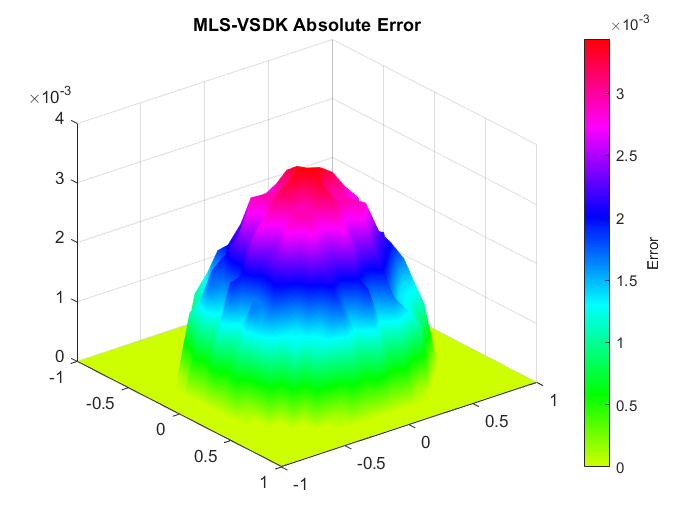}
   \caption{RMSE and abs-error of $f_{2}$ MLS (left) and MLS-VSDK (right) aproximation schemes using $w^1$ weight function}
   \label{f2;RMSE;abs}
\end{figure}
In order to investigate the convergence rate, we consider increasing sets of $\{ 25, 81, 289, 1089,  4225, 16641 \}$ Halton and uniform points as the data sites. To find an appropriate value for the shape parameter, we fix an initial value and  we multiply it by a factor of $2$ at each step. Thus, let $\boldsymbol{\varepsilon} = [0.25, 0.5, 1, 2, 4, 8]$ be the vector of shape parameter which is modified with respect to the number of the centers in both cases of uniform and Halton data sites. The left plot of Figure \ref{f2;con;rate} shows a convergence rate of $2.58$ for the MLS-VSDK and only $0.66$ for classical MLS methods, while these values are $2.04$ and $0.70$ in the right plot.
\begin{figure}[!ht]
   \includegraphics[width = 0.475\textwidth]{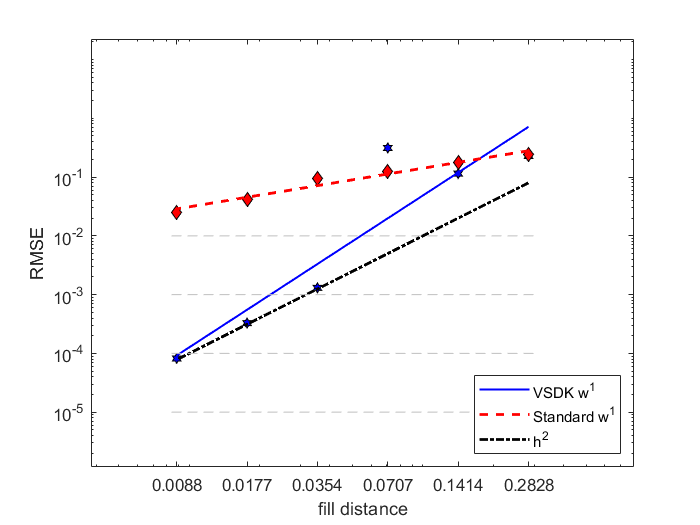}
   \hfill
   \includegraphics[width = 0.475\textwidth]{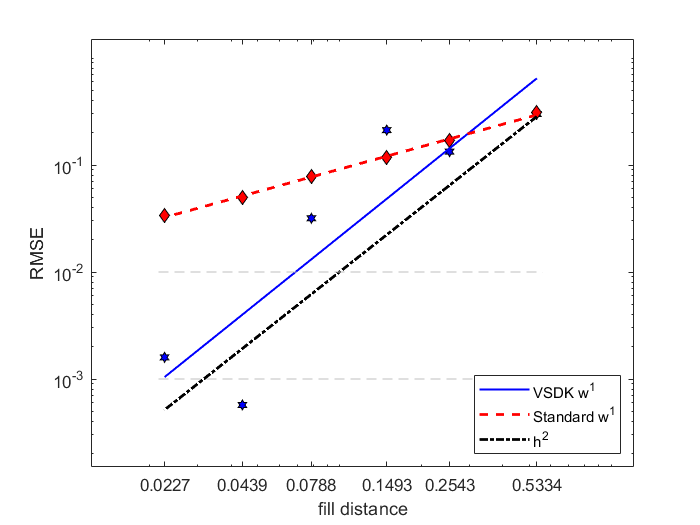}
   \caption{Convergence rates for approximation of function $f_2$ with MLS-VSDK and MLS standard schemes using \textit{Uniform} data sites (left) and \textit{Halton} data sites (right).}
   \label{f2;con;rate}
\end{figure}

\subsection{Example 3} \label{sub;sec;3}
Consider the following function
\begin{equation*}
    f_{3}(x,y) = \begin{cases}
    2 \big( 1 - \exp(-(y+0.5)^2) \big), &\qquad \lvert x \lvert \leq 0.5, \; ,\lvert y \lvert \leq 0.5. \\
    4(x + 0.8), &\qquad -0.8 \leq x \leq -0.65, \lvert y \lvert \leq 0.8. \\
    0.5, &\qquad 0.65 \leq x \leq 0.8, \lvert y \lvert \leq 0.2 \\
    0, &\qquad \text{otherwise}.
    \end{cases}
\end{equation*}
defined on $\Omega = (-1,1)^2$. Regarding the discontinuities of $f_{3}$, the scale function is considered to be
\begin{equation*}
\psi(x, y) = \begin{cases}
    1 , &\qquad \lvert x \lvert \leq 0.5, \; ,\lvert y \lvert \leq 0.5. \\
    2, &\qquad -0.8 \leq x \leq -0.65, \lvert y \lvert \leq 0.8. \\
    3, &\qquad 0.65 \leq x \leq 0.8, \lvert y \lvert \leq 0.2 \\
    0, &\qquad \text{otherwise}.
    \end{cases}
\end{equation*}
Moreover, let the centers and evaluation points be the same as the Example \ref{sub;sec;1}. Table \ref{f3;table} and \ref{f3;table2} shows RMSE of MLS-VSDK and conventional MLS approximation of $f_{3}$ using $w^4$ which interpolates the data. We underline that our experiments show that the stencil of size $n=20$ leads to the best accuracy.
\begin{table}[!ht] 
\begin{tabular}{ || c | c |  c |  c || }
\hline
\textbf{number of centers}	& \textbf{$\varepsilon$ value} & \textbf{RMSE MLS-VSDK}  &\textbf{RMSE classic MLS}\\
\hline \hline
25		& 1			& 3.67e-1      & 1.47e+0  \\
81		& 2			& 3.68e-1      & 8.86e-1  \\
289     & 4         & 1.49e-2      & 7.44e-1  \\
1089    & 8         & 4.23e-3      & 7.72e-1  \\
4225    & 16        & 1.06e-3      & 6.64e-1  \\
16641   & 32        & 2.65e-4      & 5.25e-1 \\
\hline
\end{tabular}
\centering
\caption{RMSE of $f_{3}$ interpolation with \textit{uniform} data sites.\label{f3;table}}
\centering
\end{table}
\begin{table}[!ht] 
\begin{tabular}{ || c | c |  c |  c || }
\hline
\textbf{number of centers}	& \textbf{$\varepsilon$ value} & \textbf{RMSE MLS-VSDK}  &\textbf{RMSE classic MLS}\\
\hline \hline
25		& 1			& 8.84e-1      & 1.53e+0  \\
81		& 2			& 8.95e-2      & 1.05e+0  \\
289     & 4         & 1.42e-2      & 8.74e-1  \\
1089    & 8         & 4.18e-3      & 6.48e-1  \\
4225    & 16        & 1.09e-3      & 6.68e-1  \\
16641   & 32        & 3.02e-4      & 7.07e-1 \\
\hline
\end{tabular}
\centering
\caption{RMSE of $f_{3}$ interpolation with \textit{Halton} data sites.\label{f3;table2}}
\centering
\end{table}
Figure \ref{f3;RMSE;abs} shows \textbf{RMSE} and \textbf{Absolute Error} for \textit{standard MLS} and \textit{MLS-VSDK approximation} of $f_{3}$ sampled from $1089$ uniform points using $w_{4}$ as weight function. Once again, Figure \ref{f3;RMSE;abs} shows how MLS-VSDK scheme can improve the accuracy by reducing the error near the jumps.
\begin{figure}[!hbt]
   \includegraphics[width=0.475\textwidth]{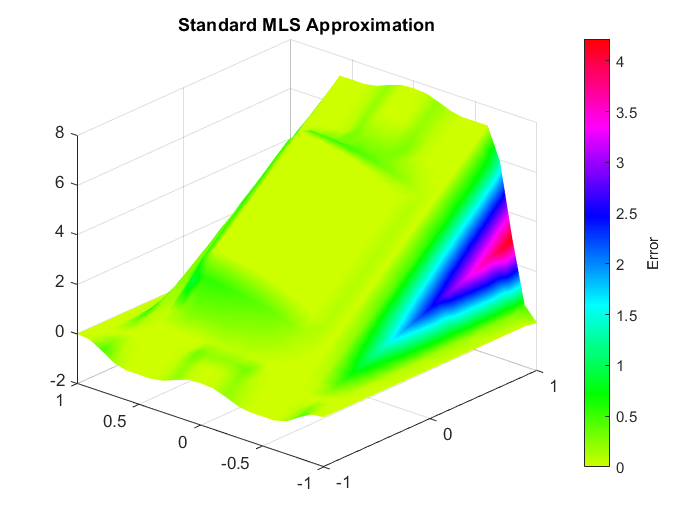}
   \hfill
   \includegraphics[width=0.475\textwidth]{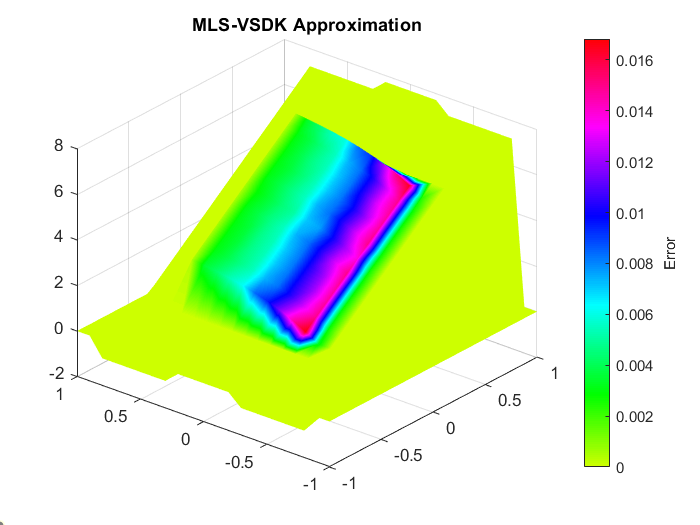}   \includegraphics[width=0.475\textwidth]{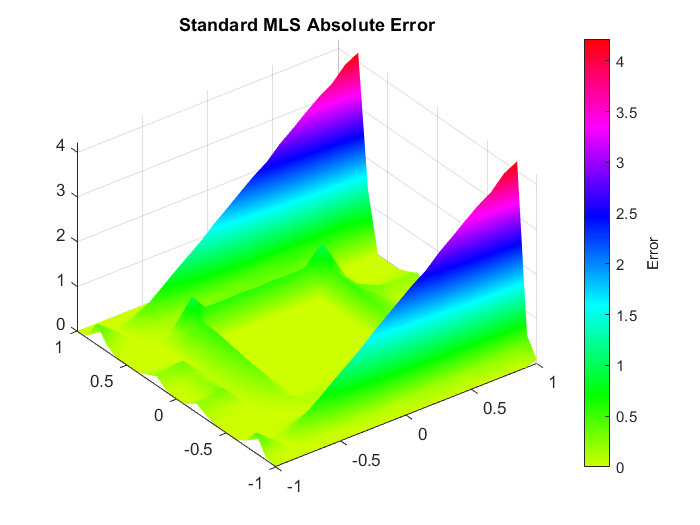}
   \hfill
   \includegraphics[width=0.475\textwidth]{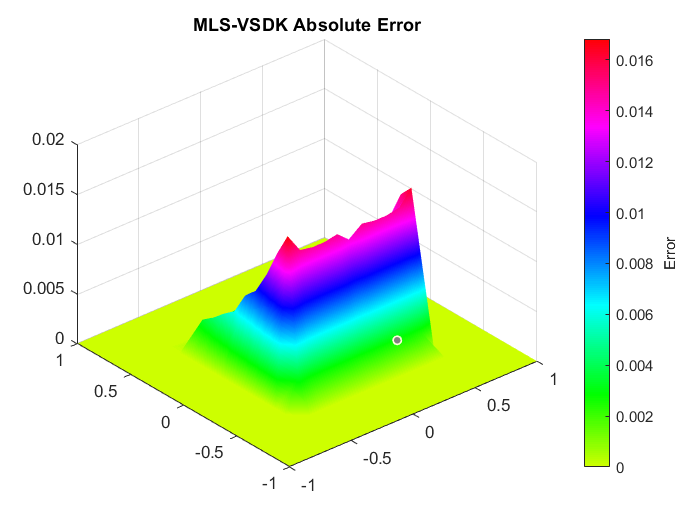}
   \caption{RMSE and abs-error of $f_{3}$ MLS(left) and MLS-VSDK(right) aproximation(interpolation) schemes using $w^4$ weight function}
   \label{f3;RMSE;abs}
\end{figure}
Eventually, letting $\boldsymbol{\varepsilon}_{GA}^{U} = [2, 4, 8, 16, 32, 64]$ and $\boldsymbol{\varepsilon}_{Mat}^{U} = [10, 20, 40, 80, 160, 320]$, Figure \ref{f3;con;rate} shows that $h^2$ convergence is achievable. To be more precise, the rate of convergence in the left plot is $2.54$ and $2.69$ for $w_{2}$ and $w_{3}$, respectively. On the other hand, letting $\boldsymbol{\varepsilon}_{GA}^{H} = [1, 2, 4, 8, 16, 32]$ and $\boldsymbol{\varepsilon}_{Mat}^{H}$ as the Uniform case, convergence rates of $2.50$ and $2.73$ is achievable when \textit{Halton} data sites are employed.
\begin{figure}[!ht]
   \includegraphics[width = 0.47\textwidth]{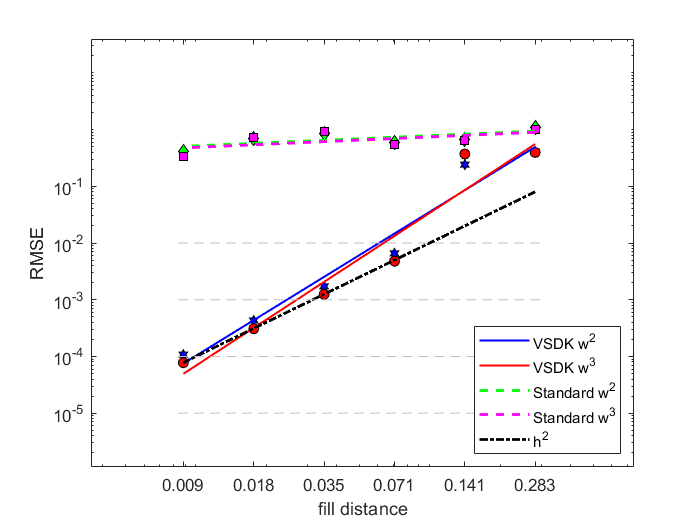}
   \hfill
   \includegraphics[width = 0.47\textwidth]{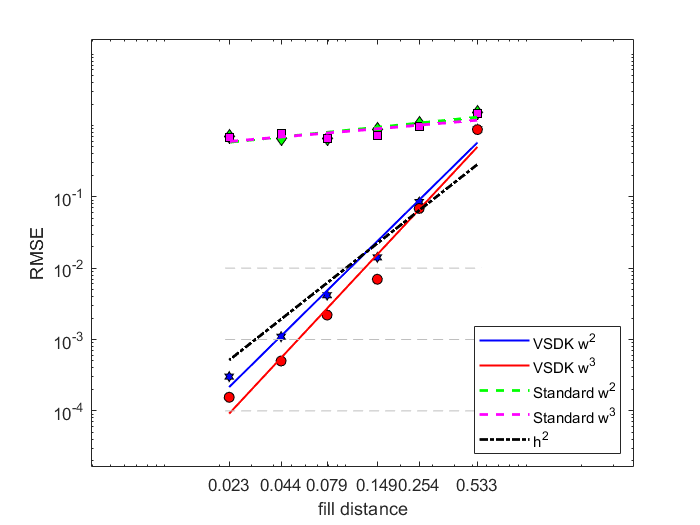}
   \caption{Convergence rates for approximation of function $f_3$ with MLS-VSDK and MLS standard schemes using \textit{Uniform} data sites (left) and \textit{Halton} data sites (right).}
   \label{f3;con;rate}
\end{figure}
\subsection{Example 4} \label{sub;sec;4}
In applications, the discontinuities are likely to be unknown. To overcome this problem, one can consider edge detector method to extract the discontinuities. However, in this way the approximation depends also on the performance of the edge detector method  as well \cite{ref-journal12}. In this direction, in this final experiment the location of the discontinuities are not exact. This is modeled by adding some noise drawn from the standard normal distribution multiplied by $0.01$ to the edges of $\Omega_{\ii} \in \mathcal{P}$. We take the test function $f_{2}$ and the data sites in Section \ref{sub;sec;2}. We fix $n=25$, and $\boldsymbol{\varepsilon}_{GA} = [0.25, 0.5, 1, 2, 4, 8]$, $\boldsymbol{\varepsilon}_{Mat} = [1, 2, 4, 8 16, 32]$ for both {\it Halton} and {\it uniform} centers. Figure \ref{f2;con;rate;noise} shows that the suggested MLS-VSDK is still able to obtain a good convergence rate when compared to classical MLS even when the discontinuities are nor known exactly.
\begin{figure}[!ht]
   \includegraphics[width = 0.47\textwidth]{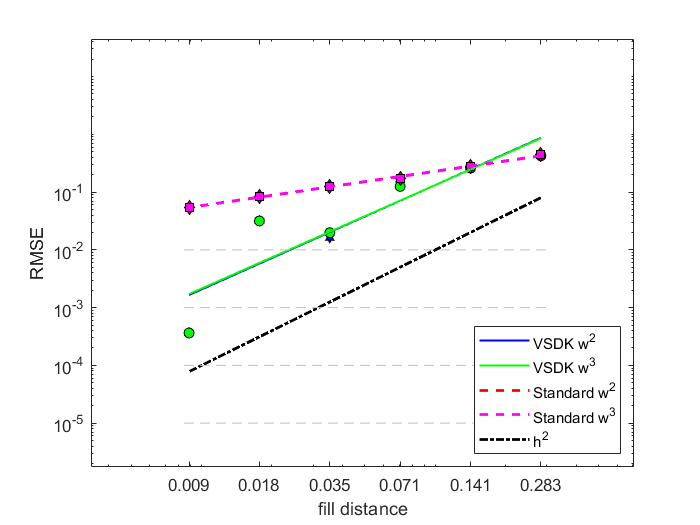}
   \hfill
   \includegraphics[width = 0.47\textwidth]{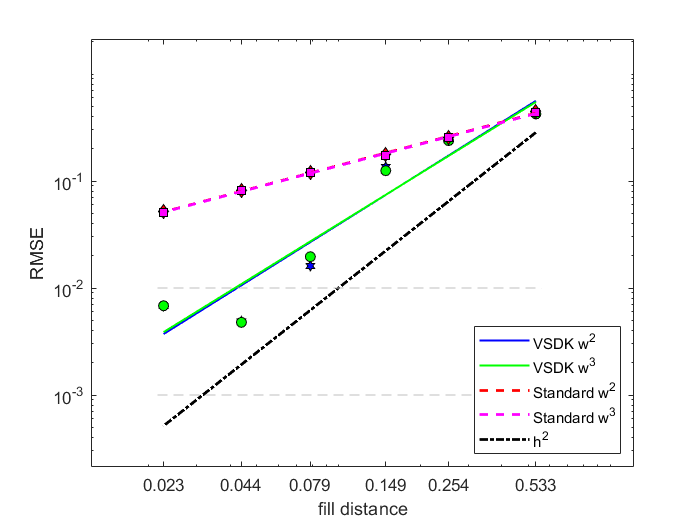}
   \caption{Convergence rates for approximation of function $f_2$, based on noisy given data values, with MLS-VSDK and MLS standard schemes using \textit{Uniform} data sites (left) and \textit{Halton} data sites (right).}
   \label{f2;con;rate;noise}
\end{figure}

\section{Conclusions}\label{sec5}
To approximate a discontinuous function using scattered data values, we studied a new technique based on the use of discontinuously scaled weight functions, that we called the MLS-VSDK scheme, that is the application of discontinuous scaled weight functions to the MLS. It enabled us to move toward a data-dependent scheme, meaning that MLS-VSDK is able to encode the behavior of the underlying function. We obtained a theoretical Sobolev-type error estimate which justifies why MLS-VSDK can outperform conventional MLS. The numerical experiments confirmed the theoretical convergence rates. Besides, our numerical tests showed that the suggested scheme can reach high accuracy even if the position of the data values are slightly perturbed.

\vskip 0.1in
\noindent {\bf Acknowledgments.} This research has been accomplished within the Rete ITaliana di Approssimazione (RITA) and the thematic group on Approximation Theory and Applications of the Italian Mathematical Union. We also received the support of GNCS-IN$\delta$AM.


\end{document}